\newtheorem{theorem}{Theorem}
\newtheorem{lemma}[theorem]{Lemma}
\newtheorem{corollary}[theorem]{Corollary}
\def\barr{\begin{array}}
\def\earr{\end{array}}
\title{The Chermak-Delgado lattice of ZM-groups}
\author{Marius T\u arn\u auceanu}
\date{November 13, 2016}
\begin{document}

\maketitle

\begin{abstract}
    In this note we prove that the Chermak-Delgado lattice of a ZM-group is a chain of length $0$.
    A similar conclusion is obtained for all dihedral groups $D_{2m}$ with $m\neq 4$.
\end{abstract}

{\small
\noindent
{\bf MSC2000\,:} Primary 20D30; Secondary 20D60, 20D99.

\noindent
{\bf Key words\,:} Chermak-Delgado measure, Chermak-Delgado lattice, Chermak-Delgado subgroup, subgroup lattice, ZM-group, dihedral group.}

\section{Introduction}

Let $G$ be a finite group and $L(G)$ be the subgroup lattice of $G$. The \textit{Chermak-Delgado measure} of a subgroup $H$ of $G$ is defined by
\begin{equation}
m_G(H)=|H||C_G(H)|.\nonumber
\end{equation}Let
\begin{equation}
m(G)={\rm max}\{m_G(H)\mid H\leq G\} \mbox{ and } {\cal CD}(G)=\{H\leq G\mid m_G(H)=m(G)\}.\nonumber
\end{equation}Then the set ${\cal CD}(G)$ forms a modular self-dual sublattice of $L(G)$,
which is called the \textit{Chermak-Delgado lattice} of $G$. It was first introduced by Chermak and Delgado \cite{6}, and revisited by Isaacs \cite{8}. In the last years
there has been a growing interest in understanding this lattice, especially for $p$-groups (see e.g. \cite{1,3,4,10}). The study can be naturally extended to nilpotent groups,
since by \cite{3} the Chermak-Delgado lattice of a direct product of finite groups decomposes as the direct product of the Chermak-Delgado lattices of the factors. Recall also
two other important properties of the Chermak-Delgado lattice that will be used in our paper:
\begin{itemize}
\item[-] if $H\in {\cal CD}(G)$, then $C_G(H)\in {\cal CD}(G)$ and $C_G(C_G(H))=H$;
\item[-] the minimum subgroup $M(G)$ of ${\cal CD}(G)$ (called the \textit{Chermak-Delgado subgroup} of $G$) is characteristic, abelian, and contains $Z(G)$.
\end{itemize}

In what follows we will focus on describing the Chermak-Delgado lattice of a ZM-group, that is a finite group with all
Sylow subgroups cyclic. By \cite{7} such a group is of type
\begin{equation}
{\rm ZM}(m,n,r)=\langle a, b \mid a^m = b^n = 1,
\hspace{1mm}b^{-1} a b = a^r\rangle, \nonumber
\end{equation}
where the triple $(m,n,r)$ satisfies the conditions
\begin{equation}
{\rm gcd}(m,n)={\rm gcd}(m,r-1)=1 \mbox{ and } r^n
\equiv 1 \hspace{1mm}({\rm mod}\hspace{1mm}m). \nonumber
\end{equation}It is clear that $|{\rm ZM}(m,n,r)|=mn$ and $Z({\rm ZM}(m,n,r))=\langle b^d\rangle$, where $d$ is the multiplicative order of
$r$ modulo $m$, i.e.
\begin{equation}
d={\rm min}\{k\in\mathbb{N}^* \mid r^k\equiv 1 \hspace{1mm}({\rm mod} \hspace{1mm}m)\}.\nonumber
\end{equation}The subgroups of ${\rm ZM}(m,n,r)$ have been completely described
in \cite{5}. Set
\begin{equation}
L=\left\{(m_1,n_1,s)\in\mathbb{N}^3 \hspace{1mm}\mid\hspace{1mm}
m_1|m,\hspace{1mm} n_1|n,\hspace{1mm} s<m_1,\hspace{1mm}
m_1|s\frac{r^n-1}{r^{n_1}-1}\right\}.\nonumber
\end{equation}Then there is a bijection between $L$ and the subgroup lattice
$L({\rm ZM}(m,n,r))$ of ${\rm ZM}(m,n,r)$, namely the function
that maps a triple $(m_1,n_1,s)\in L$ into the subgroup $H_{(m_1,n_1,s)}$ defined by
\begin{equation}
H_{(m_1,n_1,s)}=\bigcup_{k=1}^{\frac{n}{n_1}}\alpha(n_1,
s)^k\langle a^{m_1}\rangle=\langle a^{m_1},\alpha(n_1, s)\rangle,\nonumber
\end{equation}where $\alpha(x, y)=b^xa^y$, for all $0\leq x<n$ and $0\leq y<m$.
Note that:
\begin{itemize}
\item[-] $|H_{(m_1,n_1,s)}|=\frac{mn}{m_1n_1}$\,, for any $s$ satisfying $(m_1,n_1,s)\in L$;
\item[-] $H_{(m_1,n_1,s)}$ is normal in ${\rm ZM}(m,n,r)$ if and only if $m_1\mid r^{n_1}-1$ and $s=0$ (see \cite{9});
\item[-] two subgroups of ${\rm ZM}(m,n,r)$ are conjugate if and only if they have the same order (see \cite{2}).
\end{itemize}

We are now able to give our main result.\newpage

\begin{theorem}\label{th:C1}
    Under the above notation, we have
    \begin{equation}
    m({\rm ZM}(m,n,r))=\frac{m^2n^2}{d^2}\,.\nonumber
    \end{equation}Moreover, ${\cal CD}({\rm ZM}(m,n,r))$ is a chain of length $0$, namely
    \begin{equation}
    {\cal CD}({\rm ZM}(m,n,r))=\{H_{(1,d,0)}\}.\nonumber
\end{equation}
\end{theorem}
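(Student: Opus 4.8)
Throughout put $G:={\rm ZM}(m,n,r)$. The plan is to first pin down the candidate subgroup and its measure, and then prove that it is the strict global maximum of $m_G$.

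First I would identify $H_{(1,d,0)}$ explicitly. Since $\alpha(d,0)=b^{d}$ generates $Z(G)$, we have $H_{(1,d,0)}=\langle a,b^{d}\rangle=\langle a\rangle\times\langle b^{d}\rangle$, an abelian subgroup of order $\frac{mn}{d}$. The key point is that it is self-centralizing. Writing a general element as $b^{i}a^{j}$ and using $a\,b^{i}=b^{i}a^{r^{i}}$, one checks that $b^{i}a^{j}$ commutes with $a$ precisely when $r^{i}\equiv1\ ({\rm mod}\ m)$, i.e. when $d\mid i$; hence $C_G(a)=\langle a,b^{d}\rangle=H_{(1,d,0)}$. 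As $b^{d}$ is central, centralizing $a$ is the same as centralizing $H_{(1,d,0)}$, so $C_G(H_{(1,d,0)})=H_{(1,d,0)}$, and therefore $m_G(H_{(1,d,0)})=\big(\frac{mn}{d}\big)^{2}=\frac{m^{2}n^{2}}{d^{2}}$. This already yields the lower bound $m(G)\ge\frac{m^{2}n^{2}}{d^{2}}$.

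Next I would bound $m_G(H)$ for an arbitrary subgroup $H=H_{(m_1,n_1,s)}$, using the normal abelian subgroup $A=\langle a\rangle$. Two centralizer computations drive everything. On one hand, $C_G(H)\cap A=\{a^{j}\mid (r^{n_1}-1)j\equiv0\ ({\rm mod}\ m)\}$ has order $e:=\gcd(m,r^{n_1}-1)$. On the other hand, every element $b^{i}a^{j}\in C_G(H)$ must centralize $a^{m_1}\in H$, which forces $r^{i}\equiv1\ ({\rm mod}\ m/m_1)$, i.e. $d_1\mid i$, where $d_1:={\rm ord}_{m/m_1}(r)$; hence the image of $C_G(H)$ in the cyclic quotient $G/A$ of order $n$ has order at most $n/d_1$. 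Combining, $|C_G(H)|\le e\,n/d_1$, and since $|H|=\frac{mn}{m_1n_1}$ we obtain
\begin{equation}
m_G(H)\le\frac{mn}{m_1n_1}\cdot\frac{e\,n}{d_1}=\frac{m\,n^{2}\,\gcd(m,r^{n_1}-1)}{m_1\,n_1\,d_1}\,.\nonumber
\end{equation}

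The crux of the argument is then the purely number-theoretic inequality
\begin{equation}
\gcd(m,r^{n_1}-1)\cdot d^{2}\le m\,m_1\,n_1\,d_1\,,\qquad d_1={\rm ord}_{m/m_1}(r),\nonumber
\end{equation}
which is exactly the assertion that the bound above never exceeds $\frac{m^{2}n^{2}}{d^{2}}$. To prove it I would first reduce the gcd via $\gcd(m,r^{n_1}-1)=\gcd(m,r^{\gcd(n_1,d)}-1)$ (valid because $r$ has order $d$ modulo $m$), together with the defining hypothesis $\gcd(m,r-1)=1$, and then argue one prime power at a time on $m$, exploiting that ${\rm ord}_{p^{a}}(r)<p^{a}$ for every prime power $p^{a}$ exactly dividing $m$. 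This comparison of multiplicative orders is where the real work lies, and it is the step I expect to be the main obstacle; tracking the inequalities also shows that equality can hold only when $m_1=1$ and $n_1=d$. Finally I would assemble uniqueness: when $m_1=1$ the constraint $s<m_1$ forces $s=0$, so $(1,d,0)$ is the only triple attaining equality throughout the chain of estimates; when $m_1>1$ the inequality above is strict for every $n_1$, and passing to $s\neq0$ can only decrease the measure, since the extra solvability condition $e\mid s(r^{i}-1)$ discards some admissible exponents $i$ so that $|C_G(H)|=e\,n/d_1$ is no longer reached. Hence $H_{(1,d,0)}$ is the unique subgroup with $m_G(H)=\frac{m^{2}n^{2}}{d^{2}}$, giving $m(G)=\frac{m^{2}n^{2}}{d^{2}}$ and $\mathcal{CD}(G)=\{H_{(1,d,0)}\}$, a chain of length $0$.
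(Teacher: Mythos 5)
Your computations up to the reduction are sound and essentially reproduce the paper's Lemma~3: the identification $C_G(H_{(1,d,0)})=H_{(1,d,0)}$, hence $m_G(H_{(1,d,0)})=m^2n^2/d^2$, and the bound $m_G(H_{(m_1,n_1,s)})\le \frac{mn^2\gcd(m,r^{n_1}-1)}{m_1n_1\,{\rm ord}_{m/m_1}(r)}$ (which is in fact an equality, since $H_{(m_1,n_1,s)}$ is conjugate to $H_{(m_1,n_1,0)}$) are all correct. But the entire theorem then rests on the inequality $\gcd(m,r^{n_1}-1)\,d^2\le m\,m_1\,n_1\,{\rm ord}_{m/m_1}(r)$ for all $m_1\mid m$, $n_1\mid n$, and you do not prove it: you sketch a plan (reduce to $\gcd(n_1,d)$, go prime power by prime power on $m$) and explicitly flag it as ``the main obstacle'' and ``where the real work lies.'' That is precisely the hard part. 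The difficulty is that $d$ and ${\rm ord}_{m/m_1}(r)$ are least common multiples of local orders while $m$, $m_1$ and the gcd are products, so a naive prime-by-prime comparison does not assemble into the global inequality; one also has to exploit $\gcd(m,n)=1$ (which forces $\gcd(d,m)=1$) in an essential way. The paper itself remarks that maximizing the measure ``is difficult by using only Lemma 3,'' and it avoids the direct number-theoretic attack entirely: it first proves (Lemma~4) that the Chermak--Delgado subgroup $M(G)$, being normal, abelian, and containing $Z(G)$, must be $H_{(1,d,0)}$ --- the key intermediate step being that $\gcd(m_0,m/m_0)=1$, deduced from $\gcd(m,n)=1$ via a geometric-sum argument. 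That structural fact yields $m(G)=m^2n^2/d^2$ without ever establishing your inequality for all pairs $(m_1,n_1)$; uniqueness then drops out because $\gcd(m,n)=1$ splits the equation $d^2\gcd(m,r^{n_1}-1)=mm_1n_1\,{\rm ord}_{m/m_1}(r)$ into an $m$-part, forcing $\gcd(m,r^{n_1}-1)=mm_1$ and hence $m_1=1$, $d\mid n_1$, and an $n$-part forcing $n_1=d$.

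A smaller point: your closing remark that passing to $s\neq 0$ ``can only decrease the measure'' because the centralizer bound is no longer attained is false. $H_{(m_1,n_1,s)}$ and $H_{(m_1,n_1,0)}$ are conjugate, so their centralizers have the same order and their measures are equal. This does not damage your conclusion (your upper bound on $|C_G(H)|$ was derived uniformly in $s$, and in the equality case $s<m_1=1$ forces $s=0$), but the stated reason is incorrect. The substantive issue remains the unproven central inequality.
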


By taking an odd integer $m\geq 3$, $n=2$ and $r=m-1$ one obtains $d=2$ and ${\rm ZM}(m,n,r)\cong D_{2m}$. Thus, Theorem 1 leads to the following corollary.

\begin{corollary}
    If $m\geq 3$ is odd, then
    \begin{equation}
    m(D_{2m})=m^2.\nonumber
    \end{equation}Moreover, ${\cal CD}(D_{2m})$ is a chain of length $0$, namely
    \begin{equation}
    {\cal CD}(D_{2m})=\{\langle a\rangle\}.\nonumber
    \end{equation}
\end{corollary}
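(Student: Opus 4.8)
I take the statement to prove to be the main Theorem~1, of which the displayed corollary is the immediate special case $m$ odd, $n=2$, $r=m-1$ (there $r\equiv-1$, so $d={\rm ord}_m(-1)=2$ and $H_{(1,d,0)}=\langle a,b^2\rangle=\langle a\rangle$, giving $m(D_{2m})=m^2n^2/d^2=m^2$). The plan for the theorem rests on two reductions. First, since $m_G(gHg^{-1})=|gHg^{-1}|\,|gC_G(H)g^{-1}|=m_G(H)$, the measure is a conjugacy invariant; as two subgroups of ${\rm ZM}(m,n,r)$ are conjugate exactly when they share an order and $|H_{(m_1,n_1,s)}|=\frac{mn}{m_1n_1}$ does not depend on $s$, the value $m_G(H_{(m_1,n_1,s)})$ depends only on $(m_1,n_1)$. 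Hence I may assume $s=0$ and work with $H_{(m_1,n_1,0)}=\langle a^{m_1},b^{n_1}\rangle$. Second, $r^n\equiv1\ (\mathrm{mod}\ m)$ forces $d\mid n$, so the hypothesis $\gcd(m,n)=1$ gives the decisive fact $\gcd(d,m)=1$: writing $m=\prod_p p^{a_p}$, the order of $r$ modulo $p^{a_p}$ equals its order $d_p\mid p-1$ modulo $p$, and $d=\operatorname{lcm}_p d_p$. This keeps all the ensuing arithmetic elementary.

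Next I would compute $C_G(H_{(m_1,n_1,0)})$ directly from $ab=ba^r$. An element $b^ja^i$ commutes with $a^{m_1}$ iff $r^j\equiv1\ (\mathrm{mod}\ m/m_1)$, i.e. iff $\operatorname{ord}_{m/m_1}(r)\mid j$, and it commutes with $b^{n_1}$ iff $i(r^{n_1}-1)\equiv0\ (\mathrm{mod}\ m)$, i.e. iff $\tfrac{m}{\gcd(m,r^{n_1}-1)}\mid i$. These two conditions are independent, so
\begin{equation}
|C_G(H_{(m_1,n_1,0)})|=\gcd(m,r^{n_1}-1)\cdot\frac{n}{\operatorname{ord}_{m/m_1}(r)},\nonumber
\end{equation}
whence $m_G(H_{(m_1,n_1,0)})=\dfrac{mn^2\,\gcd(m,r^{n_1}-1)}{m_1\,n_1\,\operatorname{ord}_{m/m_1}(r)}$. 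Using the structural fact, $\gcd(m,r^{n_1}-1)=\prod_{d_p\mid n_1}p^{a_p}$ and $\operatorname{ord}_{m/m_1}(r)=\operatorname{lcm}_{\,p\,:\,p^{a_p}\nmid m_1}d_p$, so both factors are transparent.

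The heart of the matter is the inequality $\dfrac{mn^2\,\gcd(m,r^{n_1}-1)}{m_1n_1\operatorname{ord}_{m/m_1}(r)}\le\dfrac{m^2n^2}{d^2}$, which I would split into $d\le m_1\operatorname{ord}_{m/m_1}(r)$ and $d\,\gcd(m,r^{n_1}-1)\le mn_1$. Both follow from the elementary divisibility $\operatorname{ord}_m(r)\mid k\,\operatorname{ord}_{m/k}(r)$ valid for every $k\mid m$, itself a consequence of $\operatorname{ord}_{p^{i}}(r)\mid p\,\operatorname{ord}_{p^{i-1}}(r)$ combined with the Chinese Remainder Theorem: the first is this with $k=m_1$, and the second reduces to it with $k=m/\gcd(m,r^{n_1}-1)$ after noting $\operatorname{ord}_{\gcd(m,r^{n_1}-1)}(r)\mid n_1$. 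Multiplying the two gives $m({\rm ZM}(m,n,r))=m^2n^2/d^2$, attained at $(m_1,n_1)=(1,d)$, i.e. at $H_{(1,d,0)}=\langle a,b^d\rangle$.

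The main obstacle is the \emph{uniqueness} of the maximizer, and this is exactly where $\gcd(d,m)=1$ does the work. In any equality case one needs $m_1\operatorname{ord}_{m/m_1}(r)=d$; but $\gcd(m_1,d)=1$, so if $m_1>1$ the left side would carry a prime divisor not dividing $d$, a contradiction, forcing $m_1=1$. A short valuation argument then compares $\gcd(m,r^{n_1}-1)/n_1$ with $m/d$ and forces $n_1=d$ (removing any prime $p$ with $d_p\nmid n_1$ loses a factor $p^{a_p}$ against a gain of at most $d_p<p^{a_p}$). Since the parameters $(1,d)$ admit only $s=0$, the unique maximizer is the single subgroup $H_{(1,d,0)}$, so ${\cal CD}({\rm ZM}(m,n,r))=\{H_{(1,d,0)}\}$ is a chain of length $0$. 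As a consistency check, $H_{(1,d,0)}=C_G(\langle a\rangle)$ is self-centralizing, so the minimum and maximum of the self-dual lattice ${\cal CD}(G)$ indeed coincide.
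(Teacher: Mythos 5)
Your reduction of the Corollary to Theorem 1 (setting $n=2$, $r=m-1$, checking $d=2$ and $H_{(1,2,0)}=\langle a\rangle$) is exactly the paper's derivation. For Theorem 1 itself you take a genuinely different route: the paper first pins down the Chermak--Delgado subgroup $M(G)$ as $H_{(1,d,0)}$ by exploiting that it is normal, abelian, contains $Z(G)$ and satisfies $C_G(C_G(M(G)))=M(G)$ (its Lemma 4), and only then reads off the maximum; you instead maximize the measure directly over all pairs $(m_1,n_1)$ using the decomposition $d=\operatorname{lcm}_p d_p$ with $d_p\mid p-1$, made available by $\gcd(d,m)=1$. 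Your centralizer computation and the resulting formula for $m_G(H_{(m_1,n_1,0)})$ agree with the paper's Lemma 3, and your final coprimality argument for uniqueness is close in spirit to the paper's.

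However, the step you yourself call the heart of the matter rests on a false lemma. The divisibility ${\rm ord}_m(r)\mid k\,{\rm ord}_{m/k}(r)$ does \emph{not} hold for every $k\mid m$: take $m=k=7$ and $r=3$, where ${\rm ord}_7(3)=6$ does not divide $7\cdot{\rm ord}_1(3)=7$; or, inside a genuine ZM-setting, $G={\rm ZM}(15,4,2)$ with $k=m_1=5$, where $d={\rm ord}_{15}(2)=4$ does not divide $5\cdot{\rm ord}_3(2)=10$. The inductive step you cite, ${\rm ord}_{p^i}(r)\mid p\,{\rm ord}_{p^{i-1}}(r)$, fails precisely at $i=1$, because ${\rm ord}_p(r)$ divides $p-1$, not $p$; so the claim breaks exactly when a prime is removed entirely from the modulus, which is the case that matters for your equality analysis. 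The two inequalities you want, $d\le m_1\,{\rm ord}_{m/m_1}(r)$ and $d\,\gcd(m,r^{n_1}-1)\le mn_1$, are nevertheless true, but they must be proved from the structure you already set up rather than from the false divisibility: writing $d=\operatorname{lcm}_p d_p$, bound $d$ by ${\rm ord}_{m/m_1}(r)\cdot\prod_{p^{a_p}\mid m_1}d_p$ and use $d_p\le p-1<p^{a_p}$, and argue analogously for the second inequality; this also gives strictness whenever the relevant set of primes is nonempty and hence delivers the uniqueness $(m_1,n_1)=(1,d)$ cleanly. As written, though, the central inequalities are derived from a false statement, so the argument does not go through without this repair.
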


Now, let us assume that $m=2^km'$ with $k\geq 1$ and $m'$ odd. For $m'\geq 3$ the conclusion of Corollary 2 remains valid because $D_{2m}\cong\mathbb{Z}_2^k\times D_{2m'}$, and therefore
\begin{equation}
{\cal CD}(D_{2m})\cong{\cal CD}(\mathbb{Z}_2^k)\times {\cal CD}(D_{2m'})=\{\mathbb{Z}_2^k\}\times {\cal CD}(D_{2m'})\nonumber
\end{equation}is a chain of length $0$. We infer that the computation of ${\cal CD}(D_{2m})$ is completed by studying the case $m'=1$, i.e. for
\begin{equation}
D_{2^{k+1}}=\langle a, b \mid a^{2^k} = b^2 = 1,
\hspace{1mm}b^{-1} a b = a^{-1}\rangle. \nonumber
\end{equation}We easily obtain
\begin{equation}
    m(D_{2^{k+1}})=2^{2k}, \, \forall\, k\geq 2\nonumber
\end{equation}and
\begin{equation}
    {\cal CD}(D_{2^{k+1}})=\left\{\barr{lll}
    \{D_8, \langle a\rangle, \langle a^2, b\rangle, \langle a^2, ab\rangle, \langle a^2\rangle\},&k=2\\
    \{\langle a\rangle\},&k\geq 3.\earr\right.\nonumber
\end{equation}
\smallskip

Finally, we remark that ${\cal CD}(G)$ is a chain of length $0$ for large classes of groups $G$, such us abelian groups,
ZM-groups or dihedral groups $D_{2m}$ with $m\neq 4$. This leads to the following natural question.

\bigskip\noindent{\bf Open problem.} Which are the finite groups $G$ such that ${\cal CD}(G)=\{M(G)\}$?

\section{Proof of the main result}

We start by proving two auxiliary results.

\begin{lemma}
    For every $(m_1,n_1,s)\in L$, we have
    \begin{equation}
    m_{{\rm ZM}(m,n,r)}(H_{(m_1,n_1,s)})=\frac{mn^2{\rm gcd}(m,r^{n_1}-1)}{m_1n_1{\rm ord}_{\frac{m}{m_1}}(r)}\,,\nonumber
    \end{equation}where ${\rm ord}_{\frac{m}{m_1}}(r)$ denotes the multiplicative order of $r$ modulo $\frac{m}{m_1}$\,.
\end{lemma}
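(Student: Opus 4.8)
The plan is to compute the Chermak-Delgado measure $m_{\mathrm{ZM}(m,n,r)}(H_{(m_1,n_1,s)}) = |H_{(m_1,n_1,s)}|\cdot|C_G(H_{(m_1,n_1,s)})|$ directly. Writing $G = \mathrm{ZM}(m,n,r)$ and $H = H_{(m_1,n_1,s)}$, the order $|H| = \frac{mn}{m_1 n_1}$ is already supplied in the excerpt, so the whole task reduces to determining $|C_G(H)|$. First I would fix a convenient set of generators for $H$. Since $H = \langle a^{m_1}, \alpha(n_1,s)\rangle$ with $\alpha(n_1,s) = b^{n_1}a^s$, an element $g$ centralizes $H$ precisely when it commutes with each of these two generators, so I would split the centralizer condition into two commutation equations.

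The key computational tool is the conjugation action $b^{-1}ab = a^r$, which gives the general relation $b^{-j}a^i b^j = a^{ir^j}$ and, more usefully, a formula for how an arbitrary element $\alpha(x,y) = b^x a^y$ conjugates the generators of $H$. I would write a prospective centralizing element in the normal form $b^x a^y$ and impose that it commutes with $a^{m_1}$ and with $b^{n_1}a^s$. The condition of commuting with $a^{m_1}$ should force a congruence of the shape $a^{m_1 r^x} = a^{m_1}$, i.e. $m_1(r^x - 1) \equiv 0 \pmod m$, equivalently $r^x \equiv 1 \pmod{m/m_1}$; this is exactly where the quantity $\mathrm{ord}_{m/m_1}(r)$ enters, since it records which exponents $x$ are admissible. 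The condition of commuting with $b^{n_1}a^s$ should similarly produce a congruence on $y$ modulo a divisor of $m$ controlled by $\gcd(m, r^{n_1}-1)$, together with a compatibility constraint tying $x$ and the structure of $H$.

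Carrying out these two congruences, I expect the admissible values of $x$ (modulo $n$) to form a subgroup of index $\mathrm{ord}_{m/m_1}(r)$ in $\mathbb{Z}/n$, contributing a factor $\frac{n}{\mathrm{ord}_{m/m_1}(r)}$ to $|C_G(H)|$, and the admissible values of $y$ (modulo $m$) to contribute a factor $\gcd(m, r^{n_1}-1)$. Multiplying, $|C_G(H)| = \frac{n\,\gcd(m,r^{n_1}-1)}{\mathrm{ord}_{m/m_1}(r)}$, and then
\begin{equation}
m_G(H) = |H|\cdot|C_G(H)| = \frac{mn}{m_1 n_1}\cdot\frac{n\,\gcd(m,r^{n_1}-1)}{\mathrm{ord}_{m/m_1}(r)} = \frac{mn^2\,\gcd(m,r^{n_1}-1)}{m_1 n_1\,\mathrm{ord}_{m/m_1}(r)},\nonumber
\end{equation}
which is the claimed formula. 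A pleasant feature is that the right-hand side is independent of $s$, consistent with the earlier remark that subgroups of equal order are conjugate and hence have equal Chermak-Delgado measure; I would use this as a sanity check, and it also lets me assume $s=0$ when convenient to simplify the bookkeeping.

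The main obstacle will be the second commutation condition, the one involving the mixed generator $b^{n_1}a^s$: unlike the purely cyclic generator $a^{m_1}$, here both the $b$-part and the $a$-part interact through the twisting $b^{-1}ab = a^r$, so the equation $\alpha(x,y)\,b^{n_1}a^s = b^{n_1}a^s\,\alpha(x,y)$ unwinds into a congruence mixing $x$, $y$, $n_1$, $s$ and powers of $r$. The delicate points are verifying that the constraint on $x$ coming from this generator is compatible with (and does not further restrict beyond) the constraint already obtained from $a^{m_1}$, and correctly counting the number of solutions $y$ modulo $m$ so that it evaluates to exactly $\gcd(m, r^{n_1}-1)$ rather than some other divisor. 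Managing the interplay of these congruences, and confirming the count is independent of $s$, is where the care is needed; the rest is routine manipulation of the presentation relations.
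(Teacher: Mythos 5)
Your proposal is correct and follows essentially the same route as the paper: reduce to $s=0$ by conjugacy, impose commutation of $\alpha(x,y)=b^xa^y$ with the two generators, and read off the two independent congruences $r^x\equiv 1\ (\mathrm{mod}\ m/m_1)$ and $m\mid y(r^{n_1}-1)$, whose solution counts multiply to give $|C_G(H)|=\frac{n\,\gcd(m,r^{n_1}-1)}{\mathrm{ord}_{m/m_1}(r)}$. The ``delicate point'' you flag about the mixed generator disappears once $s=0$, since the second generator is then just $b^{n_1}$ and its commutation condition constrains only $y$, exactly as in the paper.
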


\begin{proof}
   First of all, we observe that under the notation in Section 1 we have
   \begin{equation}
   \alpha(x_1,y_1)\alpha(x_2, y_2)=\alpha(x_1+x_2, r^{x_2}y_1+y_2).\nonumber
   \end{equation}This implies that
   \begin{equation}
   \alpha(x,y)^k=b^{kx}a^{y\frac{r^{kx}-1}{r^x-1}}, \mbox{ for all } k\in\mathbb{Z},\nonumber
   \end{equation}and so
   \begin{equation}
   \alpha(x,y)^{-1}=\alpha(-x,-r^{-x}y).\nonumber
   \end{equation}Then
   \begin{equation}
   \alpha(x,y)^{-1}\alpha(u,v)\alpha(x,y)=\alpha(u,-r^uy+r^xv+y).\nonumber
   \end{equation}

   Now, let $(m_1,n_1,s)\in L$. In order to compute $|C_{{\rm ZM}(m,n,r)}(H_{(m_1,n_1,s)})|$ we can assume $s=0$, because $H_{(m_1,n_1,s)}$ and $H_{(m_1,n_1,0)}$ are conjugate. We obtain $\alpha(x,y)\in C_{{\rm ZM}(m,n,r)}(H_{(m_1,n_1,0)})$ if and only if
   \begin{equation}
   \alpha(x,y)^{-1}\alpha(0,m_1)\alpha(x,y)=\alpha(0,m_1) \mbox{ and } \alpha(x,y)^{-1}\alpha(n_1,0)\alpha(x,y)=\alpha(n_1,0),\nonumber
   \end{equation}which means
   \begin{equation}
   \alpha(0,r^xm_1)=\alpha(0,m_1) \mbox{ and } \alpha(n_1,-r^{n_1}y+y)=\alpha(n_1,0),\nonumber
   \end{equation}i.e.
   \begin{equation}
   \frac{m}{m_1}\mid r^x-1 \mbox{ and } m\mid y(r^{n_1}-1).\nonumber
   \end{equation}Clearly, these relations are equivalent respectively with
   \begin{equation}
   {\rm ord}_{\frac{m}{m_1}}(r)\mid x \mbox{ and } \frac{m}{{\rm gcd}(m,r^{n_1}-1)}\mid y,\nonumber
   \end{equation}that is $C_{{\rm ZM}(m,n,r)}(H_{(m_1,n_1,0)})=H_{(m'_1,n'_1,0)}$, where
   \begin{equation}
   m'_1=\frac{m}{{\rm gcd}(m,r^{n_1}-1)} \mbox{ and } n'_1={\rm ord}_{\frac{m}{m_1}}(r).\nonumber
   \end{equation}In particular, we have
   \begin{equation}
   |C_{{\rm ZM}(m,n,r)}(H_{(m_1,n_1,s)})|=|H_{(m'_1,n'_1,0)}|=\frac{mn}{m'_1n'_1}=\frac{n{\rm gcd}(m,r^{n_1}-1)}{{\rm ord}_{\frac{m}{m_1}}(r)}\,,\nonumber
   \end{equation}and consequently
   \begin{equation}
    m_{{\rm ZM}(m,n,r)}(H_{(m_1,n_1,s)})=m_{{\rm ZM}(m,n,r)}(H_{(m_1,n_1,0)})=\frac{mn^2{\rm gcd}(m,r^{n_1}-1)}{m_1n_1{\rm ord}_{\frac{m}{m_1}}(r)}\,,\nonumber
    \end{equation}as desired.
\end{proof}

The computation of the maximum value of $m_{{\rm ZM}(m,n,r)}(H_{(m_1,n_1,s)})$ when $m_1\mid m$ and $n_1\mid n$ is difficult by using only Lemma 3.
In order to do this the following step is crucial.

\begin{lemma}
    The Chermak-Delgado subgroup of ${\rm ZM}(m,n,r)$ is $H_{(1,d,0)}$.
\end{lemma}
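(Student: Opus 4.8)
The plan is to exploit that $M(G)$ is characteristic, hence normal, so by the normality criterion it must be one of the subgroups $H_{(\mu,\nu,0)}$ with $\nu\mid n$ and $\mu\mid{\rm gcd}(m,r^\nu-1)$. Since $M(G)\in{\cal CD}({\rm ZM}(m,n,r))$, it attains $m({\rm ZM}(m,n,r))={\rm max}_{H\le {\rm ZM}(m,n,r)}m_{{\rm ZM}(m,n,r)}(H)$; in particular it maximizes the Chermak--Delgado measure over the finite family of \emph{normal} subgroups. Thus it suffices to prove that, among all normal subgroups, the measure is maximized uniquely by $H_{(1,d,0)}$. As a by-product this records $m({\rm ZM}(m,n,r))=m_{{\rm ZM}(m,n,r)}(H_{(1,d,0)})=m^2n^2/d^2$, so that the harder global optimization over all (non-normal) subgroups is avoided entirely; this is precisely why identifying $M(G)$ first is the crucial step.

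By Lemma 3, writing $g_\nu={\rm gcd}(m,r^\nu-1)$, such a normal subgroup satisfies $m_{{\rm ZM}(m,n,r)}(H_{(\mu,\nu,0)})=\frac{mn^2 g_\nu}{\mu\,\nu\,{\rm ord}_{m/\mu}(r)}$. I would bound this via two independent inequalities: (A) $\mu\,{\rm ord}_{m/\mu}(r)\ge d$, and (B) $d\,g_\nu\le m\nu$. Granting these, one gets
\[
m_{{\rm ZM}(m,n,r)}(H_{(\mu,\nu,0)})=\frac{mn^2}{\nu}\cdot\frac{g_\nu}{\mu\,{\rm ord}_{m/\mu}(r)}\le\frac{mn^2 g_\nu}{\nu d}\le\frac{m^2n^2}{d^2},
\]
where the first step uses (A) and the second uses (B) in the form $g_\nu/\nu\le m/d$; equality throughout forces equality in both (A) and (B).

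Both (A) and (B) would follow from a single auxiliary fact: for every divisor $t\mid m$ one has $m\,{\rm ord}_t(r)\ge t\,{\rm ord}_m(r)$. To prove it I would use the reduction homomorphism $(\Z/m\Z)^\times\to(\Z/t\Z)^\times$: its kernel $K$ has order $\varphi(m)/|{\rm image}|\le\varphi(m)/\varphi(t)\le m/t$, the last inequality because $\varphi(x)/x$ only decreases when prime divisors are adjoined and every prime dividing $t$ divides $m$. Since ${\rm ord}_m(r)={\rm ord}_t(r)\cdot\big(\text{order of }r^{{\rm ord}_t(r)}\text{ in }K\big)\le{\rm ord}_t(r)\,|K|$, the fact follows. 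Taking $t=m/\mu$ yields (A); taking $t=g_\nu$ and noting ${\rm ord}_{g_\nu}(r)\mid\nu$ (because $g_\nu\mid r^\nu-1$) gives $d\,g_\nu\le m\,{\rm ord}_{g_\nu}(r)\le m\nu$, which is (B).

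The delicate point, and the main obstacle, is the equality analysis that upgrades the bound to \emph{unique} maximality. I expect the inequality in (A) to be strict whenever $\mu>1$ and that in (B) to be strict whenever $\nu\ne d$, forcing the unique maximizer to be $(\mu,\nu)=(1,d)$; establishing this requires tracking exactly when the chain $\varphi(m)/|{\rm image}|\le\varphi(m)/\varphi(t)\le m/t$ and the factorization of ${\rm ord}_m(r)$ are simultaneously tight, i.e. a prime-power (lifting-the-exponent) analysis of ${\rm ord}_{p^a}(r)$ for the primes $p\mid m$. Once uniqueness is secured, $M(G)=H_{(1,d,0)}$ follows, consistent with $H_{(1,d,0)}=\langle a\rangle\cdot Z({\rm ZM}(m,n,r))$ being abelian, characteristic, and containing the center, as required of the Chermak--Delgado subgroup.
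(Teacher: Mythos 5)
Your reduction is legitimate: $M(G)$ is characteristic, hence normal, and it attains $m(G)$, so it maximizes $m_G$ over the normal subgroups; and your auxiliary fact $m\,{\rm ord}_t(r)\ge t\,{\rm ord}_m(r)$ together with (A) and (B) correctly yields $m_{{\rm ZM}(m,n,r)}(H_{(\mu,\nu,0)})\le m^2n^2/d^2$ (one small point: the step $|K|=\varphi(m)/|{\rm image}|\le\varphi(m)/\varphi(t)$ tacitly uses surjectivity of the reduction map $(\Z/m\Z)^\times\to(\Z/t\Z)^\times$, which does hold for $t\mid m$). This is a genuinely different route from the paper's, which instead uses the full list of properties of $M(G)$ --- normal, abelian, containing $Z(G)$, and $C_G(C_G(M(G)))=M(G)$ --- to prove ${\rm gcd}(m_0,m/m_0)=1$, deduce $n_0=d$, and then eliminate $m_0>1$ by a measure comparison with $H_{(1,d,0)}$.

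The genuine gap is the step you flag yourself and leave unproved: uniqueness of the maximizer, i.e.\ strictness of (A) for $\mu>1$ and of (B) for $\nu\ne d$. The lifting-the-exponent analysis of ${\rm ord}_{p^a}(r)$ you propose is the wrong tool and is not needed; the hypothesis ${\rm gcd}(m,n)=1$ settles everything in one line. Equality $m_{{\rm ZM}(m,n,r)}(H_{(\mu,\nu,0)})=m^2n^2/d^2$ reads $d^2\,{\rm gcd}(m,r^\nu-1)=m\mu\nu\,{\rm ord}_{m/\mu}(r)$; here ${\rm gcd}(m,r^\nu-1)$, $m$, $\mu$ are divisors of $m$ while $d$, $\nu$, ${\rm ord}_{m/\mu}(r)$ are divisors of $n$, so by coprimality the equation splits into ${\rm gcd}(m,r^\nu-1)=m\mu$ and $d^2=\nu\,{\rm ord}_{m/\mu}(r)$. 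The first forces $\mu=1$ (since ${\rm gcd}(m,r^\nu-1)\le m$) and $m\mid r^\nu-1$, whence ${\rm ord}_{m/\mu}(r)={\rm ord}_m(r)=d$ and the second gives $\nu=d$. This is exactly the computation the paper performs in its proof of Theorem 1. Once you add it, your argument is complete; moreover, since your inequalities (A) and (B) never use normality, the same bound and equality analysis apply to every $(m_1,n_1,s)\in L$ (with $s<m_1=1$ then forcing $s=0$), so your route would in fact deliver Lemma 4 and Theorem 1 simultaneously.
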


\begin{proof}
    Let $H_{(m_0,n_0,s)}=M({\rm ZM}(m,n,r))$. Then the triple $(m_0,n_0,s)$ satisfies the following conditions:
    \begin{itemize}
    \item[{\rm a)}] $m_0\mid r^{n_0}-1$ and $s=0$, because $H_{(m_0,n_0,s)}$ is normal in ${\rm ZM}(m,n,r)$;
    \item[{\rm b)}] $\frac{m}{m_0}\mid r^{n_0}-1$, because $H_{(m_0,n_0,s)}$ is abelian;
    \item[{\rm c)}] $n_0\mid d$, because $Z({\rm ZM}(m,n,r))\subseteq H_{(m_0,n_0,s)}$.
    \end{itemize}Also, if $H_{(m'_0,n'_0,0)}=C_{{\rm ZM}(m,n,r)}(H_{(m_0,n_0,s)})$, then we have
    \begin{equation}
    m'_0=\frac{m}{{\rm gcd}(m,r^{n_0}-1)} \mbox{ and } n'_0={\rm ord}_{\frac{m}{m_0}}(r)\nonumber
    \end{equation}by Lemma 3. Since $C_{{\rm ZM}(m,n,r)}(H_{(m'_0,n'_0,0)})=H_{(m_0,n_0,s)}$ we infer that
    \begin{equation}
    \frac{m}{m_0}={\rm gcd}(m,r^{n'_0}-1) \mbox{ and } n_0={\rm ord}_{{\rm gcd}(m,r^{n_0}-1)}(r).\nonumber
    \end{equation}Let $d=n'_0\alpha$, where $\alpha\in\mathbb{N}^*$. Then the condition
    \begin{equation}
    r^d-1=(r^{n'_0}-1)\sum_{i=0}^{\alpha-1}r^{n'_0i}\equiv 0 \hspace{1mm}({\rm mod}\hspace{1mm}m),\nonumber
    \end{equation}implies
    \begin{equation}
    \sum_{i=0}^{\alpha-1}r^{n'_0i}\equiv 0 \hspace{1mm}({\rm mod}\hspace{1mm}m_0).\nonumber
    \end{equation}

    Assume that ${\rm gcd}(m_0,\frac{m}{m_0})\neq 1$ and take a prime $p$ dividing ${\rm gcd}(m_0,\frac{m}{m_0})$.
    Since $p\mid \frac{m}{m_0}$\,, it follows that
    \begin{equation}
    r^{n'_0}\equiv 1 \hspace{1mm}({\rm mod}\hspace{1mm}p),\nonumber
    \end{equation}and therefore
    \begin{equation}
    \sum_{i=0}^{\alpha-1}r^{n'_0i}\equiv \alpha \hspace{1mm}({\rm mod}\hspace{1mm}p).\nonumber
    \end{equation}On the other hand, by $p\mid m_0$ we have
    \begin{equation}
    \sum_{i=0}^{\alpha-1}r^{n'_0i}\equiv 0 \hspace{1mm}({\rm mod}\hspace{1mm}p).\nonumber
    \end{equation}Then $p\mid\alpha$, implying that $p\mid n$. One obtains $p\mid{\rm gcd}(m,n)$, a contradiction.
    Consequently,
    \begin{equation}
    {\rm gcd}(m_0,\frac{m}{m_0})=1.\nonumber
    \end{equation}

    It is now clear that the conditions a) and b) imply $m\mid r^{n_0}-1$, i.e. $d\mid n_0$, which together with the condition c) leads to
    \begin{equation}
    n_0=d.\nonumber
    \end{equation}

    Next, we will show that $m_0=1$. Assume $m_0\neq 1$ and denote $\beta={\rm ord}_{m_0}(r)$. Since both $m_0$ and $\frac{m}{m_0}$ divide $r^{\beta n'_0}-1$,
    we infer that $m\mid r^{\beta n'_0}-1$, and so $d\mid\beta n'_0$. Then
    \begin{equation}
    d\leq\beta n'_0\leq\varphi(m_0)n'_0< m_0n'_0,\nonumber
    \end{equation}which implies
    \begin{equation}
    m_{{\rm ZM}(m,n,r)}(H_{(m_0,n_0,s)})=\frac{m^2n^2}{dm_0n'_0}<\frac{m^2n^2}{d^2}=m_{{\rm ZM}(m,n,r)}(H_{(1,d,0)})\,,\nonumber
    \end{equation}contradicting the maximality of $m_{{\rm ZM}(m,n,r)}(H_{(m_0,n_0,s)})$. Hence $m_0=1$, as desired.
\end{proof}

We have now all ingredients to prove our main theorem.

\begin{proof}[Proof of Theorem \ref{th:C1}]
    The equality
    \begin{equation}
    m({\rm ZM}(m,n,r))=\frac{m^2n^2}{d^2}\nonumber
    \end{equation}follows by Lemma 4. Let $H_{(m_1,n_1,s)}\in {\cal CD}({\rm ZM}(m,n,r))$. Then
    \begin{equation}
    m_{{\rm ZM}(m,n,r)}(H_{(m_1,n_1,s)})=\frac{mn^2{\rm gcd}(m,r^{n_1}-1)}{m_1n_1{\rm ord}_{\frac{m}{m_1}}(r)}=\frac{m^2n^2}{d^2}\,,\nonumber
    \end{equation}or equivalently
    \begin{equation}
    d^2{\rm gcd}(m,r^{n_1}-1)=mm_1n_1{\rm ord}_{\frac{m}{m_1}}(r).\nonumber
    \end{equation}Since ${\rm gcd}(m,r^{n_1}-1)$, $m$ and $m_1$ are divisors of $m$, while $d$, $n_1$ and ${\rm ord}_{\frac{m}{m_1}}(r)$ are divisors of $n$,
    and ${\rm gcd}(m,n)=1$, we infer that
    \begin{equation}
    {\rm gcd}(m,r^{n_1}-1)=mm_1 \mbox{ and } d^2=n_1{\rm ord}_{\frac{m}{m_1}}(r).\nonumber
    \end{equation}Clearly, these equalities imply
    \begin{equation}
    m_1=1 \mbox{ and } n_1=d,\nonumber
    \end{equation}and by $s<m_0$ we obtain
    \begin{equation}
    s=0.\nonumber
    \end{equation}Hence $H_{(m_1,n_1,s)}=H_{(1,d,0)}$, completing the proof.
\end{proof}

\vspace*{5ex}\small

\hfill
\begin{minipage}[t]{5cm}
Marius T\u arn\u auceanu \\
Faculty of  Mathematics \\
``Al.I. Cuza'' University \\
Ia\c si, Romania \\
e-mail: {\tt tarnauc@uaic.ro}
\end{minipage}

\end{document}